\documentclass[11pt,a4paper]{article}
\usepackage{amsmath}
\usepackage{amsthm}
\usepackage{makeidx}
\usepackage{latexsym}
\usepackage{graphicx}
\usepackage{amssymb}
\usepackage{hyperref}
\usepackage{geometry}
\usepackage{stackrel}
\usepackage{bbold}

\geometry{left=1.15in, right=1.15in, top=.72in, bottom=.72in}

\newtheorem{theorem}{Theorem}

\newtheorem{proposition}{Proposition}
\newtheorem{corollary}{Corollary}
\newtheorem{remark}{Remark}
\newtheorem{definition}{Definition}
\theoremstyle{remark}
\newtheorem{example}{\textbf{Example}}

\usepackage[latin1]{inputenc}

\DeclareRobustCommand{\stirlingtwo}{\genfrac\{\}{0pt}{}}
\def\s{\atopwithdelims[]}

\title{On the operations of sequences in rings and binomial type sequences}
\author{Stefano Barbero, Umberto Cerruti, Nadir Murru\\ \\
Department of Mathematics G. Peano, University of Turin,\\
Via Carlo Alberto 10, 10123, Torino, ITALY\\ \\
stefano.barbero@unito.it, umberto.cerruti@unito.it, nadir.murru@unito.it}
\date{}

\begin{document}
\maketitle

\begin{abstract}
Given a commutative ring with identity $R$, many different and interesting operations can be defined over the set $H_R$ of sequences of elements in  $R$. These operations can also give $H_R$ the structure of a ring. We study some of these operations, focusing on the binomial convolution product and the operation induced by the composition of exponential generating functions. We provide new relations between these operations and their invertible elements. We also study automorphisms of the Hurwitz series ring, highlighting that some well--known transforms of sequences (such as the Stirling transform) are special cases of these automorphisms. Moreover, we introduce a novel isomorphism between $H_R$ equipped with the componentwise sum and the set of the sequences starting with 1 equipped with the binomial convolution product. Finally, thanks to this isomorphism, we find a new method for characterizing and generating all the binomial type sequences.
\end{abstract}

\noindent \textbf{Keywords:} binomial type sequence; binomial convolution product; Hurwitz series ring \\
\textbf{MSC2010:} 11B99, 11T06, 13F25

\section{Introduction}
Given a commutative unitary ring $(R,+,\cdot)$, where as usual the unit, the additive identity and the set of invertible elements will be denoted by 1,0, and $R^{*}$,  
we will  examine  the set, that we call $H_{R}$, of sequences with terms belonging to $R$.
The aim of this paper is to highlight new perspectives in studying the algebraic structures arising when $H_{R}$  is equipped with the most commonly used and interesting operations between sequences.  Specifically, we will provide novel relations between inverse elements with respect to the binomial convolution product (also called Hurwitz product) and with respect to the product defined by the composition of exponential generating functions. Moreover, we will study automorphisms of the Hurwitz series ring and find a new   computable isomorphism between the set of sequences starting with 1 equipped with the Hurwitz product and $H_{R}$ equipped with the componentwise sum. Finally thanks to this isomorphism we will show a straightforward method to generate all the binomial type sequences. 

First of all, let us start with some definitions and notation.

\begin{definition}
 Let us consider the set 
 $$H_{R}=\{(a_n)_{n=0}^{+\infty}=(a_0,a_1,a_2,...): \forall i\geq 0,\quad a_i\in R \}$$ 
of sequences whose terms belongs to $R$. We denote with a bold letter the generic element of $H_{R}$ and we will refer to the $n+1$--th term of $\boldsymbol{a} \in H_{R}$ with the two equivalent notations $\boldsymbol{a}[n]$ or $a_{n}$.

We also define the following two subsets of $H_{R}$ 
$$H^{(n)}_{R}=\{\boldsymbol{a}\in H_{R}: \boldsymbol{a}=(a_0,a_1,\ldots,a_{n-1})\}$$
the set of sequences having lenght $n\geq 0$ and the set
$$H^{m}_{R}=\{\boldsymbol{a} \in H_{R}:\boldsymbol{a}[i]=0, i=0,1,\ldots,m-1\},$$
of sequences having the first $m$ terms equal to zero.We call \emph{zero of order} $m$ any $\boldsymbol{a}\in H^{m}_{R}$.\\
The exponential generating function (e.g.f.) and ordinary generating function (o.g.f.) related to $\boldsymbol{a} \in H_{R}$ will be  
$$ s_{e}(\boldsymbol{a}) = A(t) = \sum_{h=0}^{+\infty} a_h \cfrac{t^h}{h!}, \quad  s_o(\boldsymbol{a}) = \bar A(t) = \sum_{h=0}^{+\infty} a_h t^h,$$ where we denote with $s_{e}$ and $s_{o}$ the bijections that map any sequence $\boldsymbol{a} \in H_{R}$ to its e.g.f. or o.g.f. respectively.
\end{definition}
We define two useful transform acting on sequences $\boldsymbol{a}\in H_{R}$.
\begin{definition}
The transforms $\lambda_-$ (the shift operator) and $\lambda_{+,u}, u\in R$, map any sequence $\boldsymbol{a} \in H_R$ respectively into the sequences
$$\lambda_-(\boldsymbol{a}) := (a_1, a_2, ...), \quad  \lambda_{+,u}(\boldsymbol{a}) := (u, a_0, a_1, ...).$$
\end{definition}
Finally we recall the definition of an important family of partition polynomials.
\begin{definition} \label{def:bell}
Let us consider the sequence of variables $X=(x_1,x_2,\ldots)$. The \emph{complete ordinary Bell polynomials} are defined by
$$B_0(X)=1, \quad \forall n\geq 1 \quad B_n(X)=B_n(x_1,x_2,\ldots,x_n)=\sum_{k=1}^nB_{n,k}(X),$$
where $B_{n,k}(X)$ are the \emph{partial ordinary Bell polynomials},  with
$$ B_{0,0}(X)=1, \quad \forall n\geq 1 \quad B_{n,0}(X)=0,\quad \forall k\geq 1 \quad B_{0,k}(X)=0,$$
$$B_{n,k}(X) =B_{n,k}(x_1,x_2,\ldots,x_{n}) =k!\sum_{\substack {i_1  + 2i_2  +  \cdots  + ni_n = n \\ i_1  + i_2  +  \cdots  + i_n  = k}} \prod_{j=1}^{n}\frac{x_j^{i_j }}{i_{j}!},$$
or, equivalently,
$$B_{n,k}(X)=B_{n,k}(x_1,x_2,\ldots,x_{n-k+1})=k!\sum_{\substack {i_1  + 2i_2  +  \cdots  + (n-k+1)i_{n-k+1} = n-k+1 \\ i_1  + i_2  +  \cdots  + i_{n-k+1}  = k}}\prod_{j=1}^{n-k+1}\frac{x_j^{i_j }}{i_{j}!},$$
satisfying the equality
$$ \left(\sum_{n\geq1}x_nz^n\right)^k=\sum_{n \geq k}B_{n,k}(X)z^n. $$
\end{definition}

The most simple operation we may introduce on $H_R$ is the componentwise sum, i.e., given any $\boldsymbol{a},\boldsymbol{b} \in H_R$, then $\boldsymbol{a} +\boldsymbol{b} =\boldsymbol{c}$, where $c_{n} = a_{n} + b_{n}$, for all $n \geq 0$. In this case, the additive inverse of an element $\boldsymbol{a} \in H_R$  is obviously the sequence $-\boldsymbol{a} = (-a_n)_{n=0}^{+\infty}$ and $(H_R, +)$ is a commutative group, whose identity is $\boldsymbol{0}=(0,0,0,...)$.
On the other hand many different operations, playing the role of a product, can be defined on $H_R$.

\subsection*{The Hadamard product}
Given two sequences $\boldsymbol{a},\boldsymbol{b} \in H_R$, the \emph{Hadamard product} $\boldsymbol{a} \bullet \boldsymbol{b}$ is the componentwise product, i.e., $\boldsymbol{a} \bullet \boldsymbol{b} = \boldsymbol{c}$, where $c_{n}=a_{n}\cdot b_{n}$, for all $n \geq 0$. A sequence $\boldsymbol{a} \in H_R$ is invertible with respect to $\bullet$ if and only if for all $n\geq0$  $a_{n} \in R^{*}$. The identity is clearly the sequence made up with all the elements equal to $1$, which we denote by $\boldsymbol{\underline{1}}$. This product takes its name since the paper of Hadamard \cite{Had}. Some recent and interesting studies on the Hadamard product can be found, e.g., in \cite{All} and \cite{Fill}.

\subsection*{The Hurwitz product}
Another well-known and studied operation is the \emph{Hurwitz product} (also called \emph{binomial convolution product}) that we denote with $\star$. Given $\boldsymbol{a},\boldsymbol{b} \in H_R$, the Hurwitz product is defined as $\boldsymbol{a} \star \boldsymbol{b} =\boldsymbol{c}$, where
$$\forall n\geq0, \quad c_n = \sum_{h = 0}^n \binom{n}{h}a_hb_{n-h}.$$
The identity with respect to the Hurwitz product is the sequence $\boldsymbol{\bar{1}}=(1,0,0,...)$.
Moreover the Hurwitz product and the product between e.g.f.s of sequences are strictly related. Indeed if  $s_{e}(\boldsymbol{a})=A(t)$ and $s_{e}(\boldsymbol{b})=B(t)$  then 
\begin{equation}\label{egf} s_{e}(\boldsymbol{a} \star \boldsymbol{b})=A(t)B(t)=s_{e}(\boldsymbol{a})s_{e}(\boldsymbol{b}).\end{equation}

\begin{remark}
If we consider the set $H_R[[t]]$ of the formal power series of the form $\sum_{h=0}^{+\infty} a_h \cfrac{t^h}{h!}$, then $(H_R[[t]],+,\star)$ is the Hurwitz series ring. The Hurwitz series ring has been extensively studied during the years, latest papers on this subject are, e.g., \cite{Kei}, \cite{Ben}, \cite{Gha}, \cite{Ben2}.
\end{remark}

An element $\boldsymbol{a} \in H_R$ is invertible with respect to $\star$ if and only if  $a_0 \in R^*$. In the following, we will denote by $\boldsymbol{a}^{-1}$ such an inverse and with $H_{R}^{*}$ the set of the invertible sequences in $H_{R}$ with respect to the Hurwitz product $\star$. When it exists, the inverse $\boldsymbol{b}=\boldsymbol{a}^{-1}$ can be recursively evaluated using the relations
$$b_{0}=a_{0}^{-1},\quad b_{n} = -a_{0}^{-1}\sum_{h=1}^n \binom{n}{h} a_{h} b_{n-h}, \quad n\geq1.$$
Furthermore we easily observe from \eqref{egf} that the relation between the e.g.f. of $\boldsymbol{a}=A(t)$ and $\boldsymbol{a}^{-1}$ is  $$s_{e}(\boldsymbol{a}^{-1})=\frac{1}{A(t)}=(s_{e}(\boldsymbol{a}))^{-1}$$
since $s_{e}(\boldsymbol{a}\star \boldsymbol{a}^{-1})=s_{e}(\boldsymbol{\bar{1}})=1.$
We point out that a closed expression for $\boldsymbol{a^{-1}}$ has been found in \cite{bcm} by means of the Bell polynomials. We report this result in the following theorem.

\begin{theorem} \label{thm:hinv}
Let $\boldsymbol{a}\in H_{R}^{*}$, and $\boldsymbol{b}=\boldsymbol{a}^{-1}$. Then for all $n\geq 0$, we have
\begin{equation}\label{hurinv}b_n = \frac{n!B_n(g_0,g_1,g_2,\ldots,g_n)}{a_0},\end{equation}
where
$$\boldsymbol{g}=(g_n)_{n=0}^{+\infty}=\left(-\frac{a_{n+1}}{a_0(n+1)!}\right)_{n=0}^{+\infty}.$$
\end{theorem}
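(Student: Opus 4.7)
The plan is to work entirely with exponential generating functions and exploit the closed formula $\frac{1}{1-x} = \sum_{k\geq 0} x^k$ together with the key identity satisfied by the partial ordinary Bell polynomials stated in Definition~\ref{def:bell}.

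First, I would pass to e.g.f.s: by \eqref{egf}, if $\boldsymbol{b}=\boldsymbol{a}^{-1}$ then $s_e(\boldsymbol{b}) = 1/A(t)$. Factoring out $a_0$ and introducing the sequence $\boldsymbol{g}$, I would write
$$A(t) = a_0\!\left(1+\sum_{n\geq 1}\frac{a_n}{a_0\,n!}\,t^n\right) = a_0\bigl(1 - G(t)\bigr), \qquad G(t) := \sum_{n\geq 1} g_{n-1}\,t^n,$$
so that $G(t)$ has no constant term and the geometric series expansion is legitimate in $R[[t]]$:
$$\frac{1}{A(t)} = \frac{1}{a_0}\sum_{k\geq 0} G(t)^k.$$

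Next, I would invoke the Bell polynomial identity at the end of Definition~\ref{def:bell}. Applied with $x_n = g_{n-1}$, it gives
$$G(t)^k = \sum_{n\geq k} B_{n,k}(g_0,g_1,g_2,\ldots)\,t^n$$
for every $k\geq 0$ (the case $k=0$ contributing only the constant term $B_{0,0}=1$). Substituting this into the previous display and swapping the order of summation yields
$$\frac{1}{A(t)} = \frac{1}{a_0}\sum_{n\geq 0}\left(\sum_{k=0}^{n} B_{n,k}(g_0,\ldots,g_n)\right) t^n = \frac{1}{a_0}\sum_{n\geq 0} B_n(g_0,\ldots,g_n)\,t^n,$$
where in the last equality I use that $B_{n,0}=0$ for $n\geq 1$ and $B_{0,0}=1$, so $\sum_{k=0}^{n} B_{n,k} = \sum_{k=1}^{n} B_{n,k} = B_n$ for $n\geq 1$, and the $n=0$ term equals $1 = B_0$ as required.

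Finally, I would compare this with the defining expansion $s_e(\boldsymbol{b}) = \sum_{n\geq 0} b_n\, t^n/n!$; matching coefficients of $t^n$ immediately gives $b_n = n!\,B_n(g_0,\ldots,g_n)/a_0$, which is \eqref{hurinv}. The only delicate point is a bookkeeping one, namely aligning the index shift between the variables $x_j$ of the Bell polynomials (which start at $j=1$) and the sequence $\boldsymbol{g}$ (which starts at $g_0$); once this substitution $x_j = g_{j-1}$ is set up correctly, the rest is a direct application of the identity from Definition~\ref{def:bell}.
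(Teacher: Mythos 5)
Your argument is correct: writing $A(t)=a_0(1-G(t))$ with $G(t)=\sum_{n\geq 1}g_{n-1}t^n$, expanding the geometric series, and applying the identity $\bigl(\sum_{n\geq 1}x_nz^n\bigr)^k=\sum_{n\geq k}B_{n,k}(X)z^n$ with $x_j=g_{j-1}$ gives exactly \eqref{hurinv}, and the interchange of summations is legitimate since only $k\leq n$ contributes to the coefficient of $t^n$. Note that the paper itself does not prove this theorem --- it imports it from \cite{bcm} --- so there is no internal proof to compare against; your generating-function derivation is the natural one and fills that gap cleanly (the only cosmetic remark is that, after the substitution $x_j=g_{j-1}$, the complete Bell polynomial $B_n$ actually depends on $g_0,\ldots,g_{n-1}$, so the argument $g_n$ displayed in the statement is vacuous).
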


\subsection*{The Cauchy product}
The Cauchy product $*$ between two sequences $\boldsymbol{a},\boldsymbol{b} \in H_R$, defined as $\boldsymbol{a} * \boldsymbol{b} =\boldsymbol{c}$, where $$\forall n \geq 0, \quad c_n = \sum_{h=0}^n a_h b_{n-h},$$ is striclty connected to the Hurwitz product. Indeed, the map
\begin{equation}\label{gamma}\gamma: (H_R,+,*) \rightarrow (H_R,+,\star), \quad \gamma(\boldsymbol{a})[n]=n!\boldsymbol{a}[n], n\geq 0 \end{equation}
is obviously an isomorphism. Thus all the considerations regarding the Hurwitz product $\star$ can be viewed in terms of the Cauchy product, and vice versa. In the following, we will mainly deal with the Hurwitz product $\star$.

\subsection*{Compositions of generating functions}
Finally, we can define two products by composing ordinary and exponential generating functions. Specifically, given $\boldsymbol{a},\boldsymbol{b} \in H_R$, we define $\boldsymbol{a} \circ_o \boldsymbol{b} =\boldsymbol{c}$ and $\boldsymbol{a} \circ_e \boldsymbol{b} =\boldsymbol{d}$, where
$$\boldsymbol{c} = s_o^{-1}\left( \sum_{h=0}^{+\infty} a_h \left( \sum_{k=0}^{+\infty} b_k t^k \right)^h\right), \quad \boldsymbol{d} = s_e^{-1}\left( \sum_{h=0}^{+\infty} \frac{a_h}{h!} \left( \sum_{k=0}^{+\infty} b_k \cfrac{t^k}{k!} \right)^h \right).$$
Let us observe that these products are defined only if the sequence $\boldsymbol{b}$ is a zero of order $\geq 1$.

In the following, we will often use the product $\circ_e$ obtained by the composition of exponential generating functions. Thus, we will write $\circ$ instead of $\circ_e$. It is easy to verify that the sequence $\boldsymbol{1}=\lambda_{+,0}(\boldsymbol{\bar{1}})=(0,1,0,0,...)$ is the identity with respect to the product $\circ$. 

A sequence  $\boldsymbol{a} \in H_R$ is invertible  with respect to the product $\circ$ if and only if $a_0 = 0$ and $a_1 \in R^*$. In this case, the inverse of $\boldsymbol{a}$ that we denote by $\boldsymbol{a}^{(-1)}$ is the sequence in $H_{R}$ whose terms are

\begin{equation}\label{compinv} \boldsymbol{a}^{(-1)}[0]=0, \quad \forall n \geq 1 \quad
\boldsymbol{a}^{(-1)}[n]=\frac{(n-1)!}{a_{1}^{n}}\sum_{j=0}^{n-1}(-1)^{j}{n+j-1 \choose j}B_{n-1,j}\left(\bar{a}_1,\ldots,\bar{a}_{n-j}\right),\end{equation}
where $ \bar{a}_i=\frac{a_{i+1}}{a_1(i+1)!}, i \geq 1$ (see, e.g. \cite{Char}, chapter 11, for a detailed proof).

Given the sequence $\boldsymbol{a}\in H_{R}$, with $a_0 \in R^*$, the terms of the sequence $\boldsymbol{a}^{-1}$ can be explicitly evaluated using (\ref{hurinv}): 
$$\boldsymbol{a}^{-1}=\left(\cfrac{1}{a_0}, -\cfrac{a_1}{a_0^2}, \cfrac{2a_1^2-a_0a_2}{a_0^3},\cfrac{-6a_1^3+6a_0a_1a_2-a_0^2a_3}{a_0^4},...\right).$$
On the other hand, the terms of $\lambda_{+,0}(\boldsymbol{a})^{(-1)}$ can be explicitly evaluated with (\ref{compinv}):
$$\lambda_{+,0}(\boldsymbol{a})^{(-1)}=\left(0,\cfrac{1}{a_0},-\cfrac{a_1}{a_0^3},\cfrac{3a_1^2-a_0a_2}{a_0^5},\cfrac{-15a_1^3+10a_0a_1a_2-a_0^2a_3}{a_0^7},...\right).$$
There is new and simple relation between $\boldsymbol{a}^{-1}$ and $\lambda_{+,0}(\boldsymbol{a})^{(-1)}$, i.e., between the inverse elements with respect to the products $\star$ and $\circ$ respectively, as we will show in the next theorem.
\begin{theorem}
Let $\boldsymbol{a} \in H_R$ be a  $\star$--invertible element, i.e., $a_0 \in R^*$, then
\begin{equation}\label{relinv}\boldsymbol{a}^{-1} = \lambda_{-}\left(\lambda_{+,0}(\boldsymbol{a})^{(-1)}\right) \circ \lambda_{+,0}(\boldsymbol{a}).\end{equation}
\end{theorem}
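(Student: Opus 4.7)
The plan is to translate both sides of \eqref{relinv} into the language of exponential generating functions and reduce the identity to the familiar chain-rule formula for the derivative of a compositional inverse.

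First I would record the three dictionary entries that make everything align. Writing $A(t)=s_e(\boldsymbol{a})$, the operator $\lambda_{+,0}$ corresponds to formal integration from $0$: indeed, if $\boldsymbol{b}=\lambda_{+,0}(\boldsymbol{a})$, then $b_0=0$ and $b_n=a_{n-1}$ for $n\geq 1$, so
$$F(t):=s_e(\lambda_{+,0}(\boldsymbol{a}))=\sum_{n\geq 1} a_{n-1}\frac{t^n}{n!}=\sum_{m\geq 0} a_m\frac{t^{m+1}}{(m+1)!},$$
which satisfies $F(0)=0$ and $F'(t)=A(t)$. Dually, $\lambda_{-}$ corresponds to formal differentiation: for any $\boldsymbol{c}\in H_R$, $s_e(\lambda_{-}(\boldsymbol{c}))(t)=\bigl(s_e(\boldsymbol{c})\bigr)'(t)$. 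Finally, by definition of $\circ=\circ_e$, one has $s_e(\boldsymbol{u}\circ \boldsymbol{v})=s_e(\boldsymbol{u})\bigl(s_e(\boldsymbol{v})\bigr)$, i.e., $\circ$ is composition of e.g.f.s, and $\boldsymbol{a}^{(-1)}$ corresponds to compositional inverse of $s_e(\boldsymbol{a})$.

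Using these identifications, let $G:=s_e\bigl(\lambda_{+,0}(\boldsymbol{a})^{(-1)}\bigr)$, which by hypothesis $a_0\in R^*$ exists and is the unique formal power series with $G(0)=0$ and $G\bigl(F(t)\bigr)=t$. Applying the three translations in order, the right-hand side of \eqref{relinv} has e.g.f.
$$s_e\!\left(\lambda_{-}\bigl(\lambda_{+,0}(\boldsymbol{a})^{(-1)}\bigr)\circ \lambda_{+,0}(\boldsymbol{a})\right)=G'\bigl(F(t)\bigr),$$
while the left-hand side has e.g.f. $s_e(\boldsymbol{a}^{-1})=1/A(t)$ by the remark following \eqref{egf}.

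Thus the theorem reduces to the identity $G'\bigl(F(t)\bigr)=1/A(t)$. Differentiating the relation $G(F(t))=t$ with the formal chain rule gives $G'(F(t))\,F'(t)=1$, and since $F'(t)=A(t)$ with $A(0)=a_0\in R^*$, the series $A(t)$ is invertible in $R[[t]]$, so $G'(F(t))=1/F'(t)=1/A(t)$. Applying $s_e^{-1}$ yields \eqref{relinv}. The only substantive point to justify carefully is the purely formal chain rule and the fact that $F$ has a well-defined compositional inverse over the ring $R$, which is exactly the hypothesis $a_0\in R^*$; everything else is bookkeeping.
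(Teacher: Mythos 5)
Your proposal is correct and follows essentially the same route as the paper's own proof: translate $\lambda_{+,0}$, $\lambda_-$, and $\circ$ into integration, differentiation, and composition of e.g.f.s, then differentiate $G(F(t))=t$ to get $G'(F(t))F'(t)=1$ with $F'(t)=A(t)$. You are merely more explicit about the dictionary and the invertibility of $A(t)$, which the paper leaves implicit.
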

\begin{proof}
Let  $s_{e}(\boldsymbol{a})=A(t)$, then $$s_{e}(\lambda_+(0,\boldsymbol{a}))=F(t)=\int A(t)dt.$$ If $s_{e}\left(\lambda_{+,0}(\boldsymbol{a})^{(-1)}\right)=G(t)$, by definition of inverse with respect to  $\circ$,  we have
$$F(G(t)) = G(F(t)) = t.$$
Moreover, let us observe that $$s_{e}\left(\lambda_{-}\left(\lambda_{+,0}(\boldsymbol{a})^{(-1)}\right)\right)=G'(t)\quad s_{e}\left(\lambda_{-}\left(\lambda_{+,0}(\boldsymbol{a})^{(-1)}\right) \circ \lambda_{+,0}(\boldsymbol{a})\right)=G'(F(t)).$$ Thus, we have
$$G'(F(t))F'(t) = 1$$
and, since $F'(t) = A(t)$, we obtain
$$G'(F(t)) = \cfrac{1}{A(t)},$$
that completes the proof.
\end{proof}
It seems hard to find a simple relation like \eqref{relinv}  describing $\lambda_{+,0}(\boldsymbol{a})^{(-1)}=\boldsymbol{\lambda}^{(-1)}$ in terms of $\boldsymbol{a}^{-1}$. However, we easily find the following recursive expression

\begin{equation} 
\label{eq:cinv} \forall n\geq0 \quad \boldsymbol{\lambda}^{(-1)}[n+1] = n!\sum_{k=0}^{n} \frac{\boldsymbol{a}^{-1}[k]}{k!}B_{n,k}\left(\bar{\boldsymbol{\lambda}}^{(-1)}[1],\ldots \bar{\boldsymbol{\lambda}}^{(-1)}[n-k+1]\right), 
\end{equation}
where $\bar{\boldsymbol{\lambda}}^{(-1)}=\frac{\boldsymbol{\lambda}^{(-1)}[i]}{i!}$ for all $i\geq 1$. Indeed, using the same notation as in the previous theorem, we know that 
$$F'(G(t))G'(t) = 1, \quad F'(G(t)) = A(G(t)).$$
Thus,
$$G'(t) = \cfrac{1}{A(G(t))}$$
and  since 
$$\cfrac{1}{A(G(t))} = \sum_{k=0}^{+\infty} \frac{\boldsymbol{a}^{-1}[k]}{k!}(G(t))^k = \sum_{n=0}^{+\infty}\left( \sum_{k=0}^n \frac{\boldsymbol{a}^{-1}[k]}{k!}B_{n,k}\left( \bar{\boldsymbol{\lambda}}^{(-1)}[1],\ldots \bar{\boldsymbol{\lambda}}^{(-1)}[n-k+1] \right)\right)t^n,$$ $$G'(t) = \sum_{n=0}^{+\infty}\frac{\boldsymbol{\lambda}^{(-1)}[n+1]}{n!}t^n$$
 we obtain \eqref{eq:cinv}.

\subsection*{Automorphisms}
Some distributive properties hold for the products $\bullet$, $\star$, $\circ$. We summarize them in the following proposition.

\begin{proposition}
 Given any $\boldsymbol{b} \in H_R^0$ and $\boldsymbol{a}, \boldsymbol{c} \in H_R$, we have
\begin{itemize}
\item[-] $(\boldsymbol{a} +\boldsymbol{c}) \circ \boldsymbol{b} = (\boldsymbol{a} \circ \boldsymbol{b}) + (\boldsymbol{c} \circ \boldsymbol{b})$;
\item[-] $(\boldsymbol{a} \star \boldsymbol{c}) \circ \boldsymbol{b} = (\boldsymbol{a} \circ \boldsymbol{b}) \star (\boldsymbol{c} \circ \boldsymbol{b})$.
\end{itemize}
Given the sequence $\boldsymbol{\beta}(r) = (r^{n})_{n=0}^{+\infty}$, for any $r \in R$, and any $\boldsymbol{a},\boldsymbol{c} \in H_R$, we have
\begin{itemize}
\item[-] $(\boldsymbol{a} +\boldsymbol{c}) \bullet \boldsymbol{\beta}(r) = (\boldsymbol{a} \bullet \boldsymbol{\beta}(r)) + (\boldsymbol{c} \bullet \boldsymbol{\beta}(r))$;
\item[-] $(\boldsymbol{a} \star \boldsymbol{c}) \bullet \boldsymbol{\beta}(r) = (\boldsymbol{a} \bullet \boldsymbol{\beta}(r)) \star (\boldsymbol{c} \bullet \boldsymbol{\beta}(r))$.
\end{itemize}
\end{proposition}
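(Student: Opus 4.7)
The plan is to translate the four identities into the language of exponential generating functions and exploit two simple ring homomorphisms on $R[[t]]$, namely composition with a series having no constant term (for the two items involving $\circ$) and the substitution $t\mapsto rt$ (for the two items involving $\boldsymbol{\beta}(r)$). Set $A(t)=s_{e}(\boldsymbol{a})$, $B(t)=s_{e}(\boldsymbol{b})$, $C(t)=s_{e}(\boldsymbol{c})$.

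For the first two items, since $\boldsymbol{b}$ is a zero of order $\geq 1$, $B(t)$ has no constant term and the substitution $\Phi\colon F(t)\mapsto F(B(t))$ is a well-defined ring endomorphism of $R[[t]]$. By the definition of $\circ=\circ_{e}$, one has $s_{e}(\boldsymbol{a}\circ\boldsymbol{b})=\Phi(A)$ and $s_{e}(\boldsymbol{c}\circ\boldsymbol{b})=\Phi(C)$; using \eqref{egf}, $s_{e}((\boldsymbol{a}\star\boldsymbol{c})\circ\boldsymbol{b})=\Phi(AC)$. The additivity $\Phi(A+C)=\Phi(A)+\Phi(C)$ and the multiplicativity $\Phi(AC)=\Phi(A)\Phi(C)$ then yield, after applying $s_{e}^{-1}$, the two distributive rules involving $\circ$.

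For the two items involving $\boldsymbol{\beta}(r)$ I would just compute componentwise. The first is trivial from $(a_{n}+c_{n})r^{n}=a_{n}r^{n}+c_{n}r^{n}$. For the second, the $n$-th term of the left hand side is
\[
r^{n}\sum_{h=0}^{n}\binom{n}{h}a_{h}c_{n-h}=\sum_{h=0}^{n}\binom{n}{h}\bigl(a_{h}r^{h}\bigr)\bigl(c_{n-h}r^{n-h}\bigr),
\]
which is exactly the $n$-th term of the right hand side. (Equivalently one can note that $s_{e}(\boldsymbol{a}\bullet\boldsymbol{\beta}(r))(t)=A(rt)$, so $\boldsymbol{a}\mapsto\boldsymbol{a}\bullet\boldsymbol{\beta}(r)$ corresponds to the ring homomorphism $F(t)\mapsto F(rt)$ of $R[[t]]$, which instantly gives both identities.)

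There is no real obstacle here; the only point worth emphasizing is the hypothesis $b_{0}=0$, without which $\Phi$ would not be defined, and the identification \eqref{egf} of $\star$ with ordinary multiplication of e.g.f.s, which is what converts the multiplicativity of $\Phi$ into the $\star$-distributivity of $\circ$.
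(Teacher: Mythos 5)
Your proof is correct and follows essentially the same route as the paper, which simply notes that the claims are straightforward once one remembers that $+$ and $\star$ correspond to the sum and product of exponential generating functions; you merely spell out the underlying ring-homomorphism argument (substitution $t\mapsto B(t)$ and $t\mapsto rt$) that the paper leaves implicit. No issues.
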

\begin{proof}
The proof is straightforward remembering that $+$ and $\star$ represent the sum and the product of formal exponential series, respectively.
\end{proof}

Clearly the product $\bullet$ by $\boldsymbol{\beta}(r)$ may be considered as an endomorphism of the ring $(H_R, +, \star)$. Moreover, we can also introduce a class of endomorphisms  in $(H_R, +, \star)$ by means of the product $\circ$ by  $\boldsymbol{b} \in H_R^0$.  We explicitly state these important and  immediate consequences of the previous proposition
\begin{corollary}
For any $z\in R$  and any fixed sequence $\boldsymbol{b} \in H^{0}_{R}$ the maps
$$\mathcal{H}_{\boldsymbol{\beta}(z)}:(H_R,+,\star) \rightarrow (H_R,+,\star),\quad \mathcal{H}_{\boldsymbol{\beta}(r)}(\boldsymbol{a})=\boldsymbol{a}\bullet \boldsymbol{\beta}(r)$$ 
$$\mu_{\boldsymbol{b}} : (H_R,+,\star) \rightarrow (H_R,+,\star), \quad  \mu_{\boldsymbol{b}}(\boldsymbol{a})=\boldsymbol{a}\circ \boldsymbol{b}$$
 are endomorphisms. Furthermore, if $r,\boldsymbol{b}[1] \in R^{*}$ then $\mathcal{H}_{\boldsymbol{\beta}(z)}$ and $\mu_{\boldsymbol{b}}$ are authomorphisms whose inverses are, respectively, $\mathcal{H}_{\boldsymbol{\beta}(r^{-1})}$ and
$\mu^{-1}_{\boldsymbol{b}}=\mu_{\boldsymbol{b}^{(-1)}}$ and the identity authomorphism is $\mathcal{H}_{\boldsymbol{\underline{1}}}=\mu_{\boldsymbol{1}}$.
\end{corollary}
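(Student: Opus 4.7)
My plan is to treat the two endomorphism claims separately and then combine them for the automorphism statement, since the two halves of the previous proposition have been written symmetrically with exactly this use in mind. First I would observe that the additive parts of each pair of distributive laws immediately say that $\mathcal{H}_{\boldsymbol{\beta}(r)}$ and $\mu_{\boldsymbol{b}}$ commute with the componentwise sum, while the two multiplicative parts say that they commute with $\star$. Together with the fact that both maps visibly send $\boldsymbol{\bar 1}$ to itself (since $\boldsymbol{\bar 1} \bullet \boldsymbol{\beta}(r) = \boldsymbol{\bar 1}$, and $\boldsymbol{\bar 1} \circ \boldsymbol{b} = \boldsymbol{\bar 1}$ because its e.g.f.\ is the constant $1$), this gives the endomorphism claim in a single line.

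Next I would establish the identity relation $\mathcal{H}_{\boldsymbol{\underline{1}}} = \mu_{\boldsymbol{1}}$ by direct verification: $\boldsymbol{a} \bullet \boldsymbol{\underline{1}} = \boldsymbol{a}$ componentwise, while $\boldsymbol{a} \circ \boldsymbol{1}$ corresponds on the e.g.f.\ side to $A(t)$ evaluated at $t$, hence again equals $\boldsymbol{a}$. For the inverses I would translate each composition through generating functions. On the Hadamard side, $\mathcal{H}_{\boldsymbol{\beta}(r)} \circ \mathcal{H}_{\boldsymbol{\beta}(r^{-1})}(\boldsymbol{a}) = (\boldsymbol{a} \bullet \boldsymbol{\beta}(r^{-1})) \bullet \boldsymbol{\beta}(r)$; since $\bullet$ is associative and commutative and $\boldsymbol{\beta}(r^{-1}) \bullet \boldsymbol{\beta}(r) = \boldsymbol{\beta}(r^{-1}r) = \boldsymbol{\underline{1}}$, this collapses to $\boldsymbol{a}$, and symmetrically for the other order. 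On the composition side, I would observe that $\mu_{\boldsymbol{b}} \circ \mu_{\boldsymbol{c}} = \mu_{\boldsymbol{c} \circ \boldsymbol{b}}$ for any $\boldsymbol{b}, \boldsymbol{c} \in H_R^0$, because passing to e.g.f.s this is just associativity of composition of formal power series: $(A \circ C)\circ B = A \circ (C \circ B)$. Applying this with $\boldsymbol{c} = \boldsymbol{b}^{(-1)}$ and using $\boldsymbol{b}^{(-1)} \circ \boldsymbol{b} = \boldsymbol{b} \circ \boldsymbol{b}^{(-1)} = \boldsymbol{1}$ (the defining property recalled from \eqref{compinv}) gives $\mu_{\boldsymbol{b}} \circ \mu_{\boldsymbol{b}^{(-1)}} = \mu_{\boldsymbol{b}^{(-1)} \circ \boldsymbol{b}} = \mu_{\boldsymbol{1}} = \mathrm{id}$, and vice versa.

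There is essentially no obstacle here; the only point that might look subtle is the associativity $(\boldsymbol{a} \circ \boldsymbol{c}) \circ \boldsymbol{b} = \boldsymbol{a} \circ (\boldsymbol{c} \circ \boldsymbol{b})$, but since $\circ = \circ_e$ is by definition the operation corresponding to substitution of e.g.f.s, it inherits associativity from composition of formal power series, and the hypothesis $\boldsymbol{b}[0] = 0$ (needed for the substitutions to make sense) is ensured throughout by restricting to $H_R^0$. Hence the entire corollary reduces to bookkeeping on top of the previous proposition and the already-recalled existence formula for $\boldsymbol{b}^{(-1)}$.
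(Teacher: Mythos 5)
Your proposal is correct and follows exactly the route the paper intends: the paper offers no separate proof, presenting the corollary as an immediate consequence of the preceding distributivity proposition, and your elaboration (preservation of $+$ and $\star$ from the proposition, preservation of $\boldsymbol{\bar 1}$, and inverses via $\boldsymbol{\beta}(r)\bullet\boldsymbol{\beta}(r^{-1})=\boldsymbol{\underline{1}}$ and associativity of e.g.f.\ composition giving $\mu_{\boldsymbol{b}}\circ\mu_{\boldsymbol{c}}=\mu_{\boldsymbol{c}\circ\boldsymbol{b}}$) is precisely the bookkeeping the authors leave implicit. No gaps.
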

Some interesting classical transforms acting on sequences may be interpreted as special cases of automorphisms $\mu_{\boldsymbol{b}}$ of $(H_R, +, \star)$ for a suitable choiche of $\boldsymbol{b}$. Let us summarize these transforms in the following definition.

\begin{definition}
Given any sequence $\boldsymbol{a} \in H_R$, we define
\begin{enumerate}
\item the \emph{alternating sign} transform $\mathcal E$ that maps $\boldsymbol{a}$ into  a sequence $\boldsymbol{b} = \mathcal E(\boldsymbol{a}) \in H_R$, whose terms are
$$b_n = (-1)^n a_n;$$
\item the \emph{Stirling transform} $\mathcal S$ that maps $\boldsymbol{a}$ into a sequence $\boldsymbol{b} = \mathcal S(\boldsymbol{a}) \in H_R$, whose terms are
$$b_n = \sum_{h=0}^n \stirlingtwo{n}{h} a_h,$$
where $\stirlingtwo{n}{h}$ are the Stirling numbers of the second kind (see e.g. \cite{Gra}, chapter 6, for definition and properties of Stirling numbers of first and second kind);
\item  the inverse $\mathcal{S}^{-1}$ of the Stirling transform that maps $\boldsymbol{a}$ into a sequence $\boldsymbol{b} = \mathcal S^{-1}(\boldsymbol{a}) \in H_R$, whose terms are
$$b_n = \sum_{h=0}^{n}(-1)^{n-h} {n \brack h} a_h,$$
where $n \s h$ are the (unsigned) Stirling numbers of the first kind.
\end{enumerate}
\end{definition}

The transform $\mathcal{E}$ is often used for studying properties of sequences,
also for its simple but important effect on e.g.f.s, since if  $s_{e}(\boldsymbol{a})=A(t)$, then $s_{e}(\mathcal E(\boldsymbol{a}))=A(-t)$ for all $\boldsymbol{a}\in H_{R}$. Clearly, $\mathcal E$ is an automorphism of $(H_R, +, \star)$, since we have $\mathcal E = \mu_{-\boldsymbol{1}}$.
Furthermore, to clearly identify the sequence $\boldsymbol{b}$ such that $\mathcal{S}=\mu_{\boldsymbol{b}}$ we need the following result 
\begin{proposition} \label{egfstirl}
Given $\boldsymbol{a} \in H_{R} $, if  $s_{e}(\boldsymbol{a})=A(t)$, then $s_{e}(\mathcal{S}(\boldsymbol{a}))= A(e^t-1)$.
\end{proposition}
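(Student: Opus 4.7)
The plan is to compute $s_e(\mathcal{S}(\boldsymbol{a}))$ directly from the definition and then recognise the result as $A(e^t-1)$ by invoking the classical exponential generating function identity for the Stirling numbers of the second kind, namely
\[
\sum_{n \geq h} \stirlingtwo{n}{h} \frac{t^n}{n!} = \frac{(e^t-1)^h}{h!}, \qquad h \geq 0,
\]
which can be found in any standard reference (e.g.\ \cite{Gra}, chapter 6). Using $\stirlingtwo{n}{h}=0$ for $h>n$ and the definition of $\mathcal{S}$, one writes
\[
s_e(\mathcal{S}(\boldsymbol{a})) = \sum_{n=0}^{+\infty}\left(\sum_{h=0}^n \stirlingtwo{n}{h} a_h\right)\frac{t^n}{n!}
= \sum_{h=0}^{+\infty} a_h \sum_{n \geq h} \stirlingtwo{n}{h}\frac{t^n}{n!}.
\]

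The first step I would carry out is therefore the interchange of the two summations, which is legitimate at the level of formal power series since for each fixed power $t^n$ only finitely many pairs $(h,n)$ contribute. Second, I would substitute the Stirling identity above into the inner sum to obtain
\[
s_e(\mathcal{S}(\boldsymbol{a})) = \sum_{h=0}^{+\infty} a_h \frac{(e^t-1)^h}{h!}.
\]
Third, recognising this as $s_e(\boldsymbol{a})$ evaluated at $e^t-1$ in place of $t$ yields the claimed equality $s_e(\mathcal{S}(\boldsymbol{a})) = A(e^t-1)$. Note that $e^t-1$ has zero constant term, so the substitution makes sense as a formal power series.

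There is no real obstacle in this proof; the only point that deserves attention is the validity of the Stirling EGF identity and of the composition $A(e^t-1)$, both of which are standard. If a self-contained argument for the identity were preferred, I would derive it by applying $s_e$ to both sides of the combinatorial recursion $\stirlingtwo{n+1}{h} = h\stirlingtwo{n}{h} + \stirlingtwo{n}{h-1}$ and solving the resulting linear ODE on $h$, but for the purposes of this proposition it is cleaner simply to cite the identity.
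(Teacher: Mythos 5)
Your proposal is correct and follows essentially the same route as the paper's proof: both cite the classical identity $\sum_{n\geq h}\stirlingtwo{n}{h}\frac{t^n}{n!}=\frac{(e^t-1)^h}{h!}$ from \cite{Gra}, interchange the order of summation, and recognise the result as $A(e^t-1)$. Your added remarks on the formal legitimacy of the interchange and of the substitution (since $e^t-1$ has zero constant term) are sound but not needed beyond what the paper already does.
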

\begin{proof}
Let us recall that $\stirlingtwo{n}{h} = 0$ when $h > n$ and 
$$\sum_{n=0}^{+\infty} \stirlingtwo{n}{h}\cfrac{t^n}{n!} = \cfrac{(e^t - 1)^h}{h!},$$
see \cite{Gra} pag. 337.
Thus, we have
$$B(t) = \sum_{n=0}^{+\infty}\left( \sum_{h=0}^n \stirlingtwo{n}{h}a_h \right)\cfrac{t^n}{n!} = \sum_{h=0}^{+\infty} a_h \left( \sum_{n=0}^{+\infty} \stirlingtwo{n}{h}\cfrac{t^n}{n!} \right) = \sum_{h=0}^{+\infty} a_h\cfrac{(e^t-1)^h}{h!} = A(e^t-1).$$
\end{proof}
Now, observing that $s_e^{-1}(e^t-1) = (0,1,1,1,...)=\lambda_{+,0}(\boldsymbol{\underline{1}})$, we have
$$\mathcal S = \mu_{\lambda_{+,0}(\boldsymbol{\underline{1})}}.$$
Since from Proposition \ref{egfstirl} we also easily find that $s_{e}(\mathcal {S}^{-1}(\boldsymbol{a}))=A(\log (t+1))$, from the formal power series identity $$\log(t+1)= \sum_{h=1}^{+\infty} (-1)^{h-1}(h-1)! \cfrac{t^h}{h!},$$ we have 
$$\mathcal S^{-1} = \mu_{\lambda_{+,0}(\mathcal{E}(\boldsymbol{f}))},$$
where $\boldsymbol{f}= (0!, 1!, 2!, ...)$ is the sequence of the factorial numbers A000142 in Oeis \cite{Oeis}.

\section{Isomorphisms}

In the following we focus on a subgroup of $H_R^{*}$, the set of invertible elements of $H_R$ with respect to the Huwitz product. Specifically, we consider
$$U_R = \{\boldsymbol{a} \in H_R^*: a_0 = 1 \},$$ and we indicate by $U_{R}^{(n)}$ the set of sequences in $U_{R}$ of lenght $n$. Furthermore we will deal with products and powers of sequences with respect to the Hurwitz product.

Given $\boldsymbol{a} \in U_R$ with $s_{e}(\boldsymbol{a})=A(t)$, we let $\boldsymbol{a}^x$ denote the sequence such that $s_{e}(\boldsymbol{a}^x)=(A(t))^x$. The sequence $\boldsymbol{a}^x$ can be viewed as a sequence of polynomials in the variable $x$ and we use the notation $p^{(\boldsymbol{a})}(x) = \boldsymbol{a}^x$ in order to highlight this, i.e., $$p^{(\boldsymbol{a})}(x) = \left(p_n^{(\boldsymbol{a})}(x)\right)_{n=0}^{+\infty} = (\boldsymbol{a}^x[n])_{n=0}^{+\infty},$$ where $p_n^{(\boldsymbol{a})}(x) \in R[x]$. Clearly  we also have 
\begin{equation} \label{px+y}
p^{(\boldsymbol{a})}(x+y)=p^{(\boldsymbol{a})}(x)\star p^{(\boldsymbol{a})}(y) \end{equation}
since $(A(t))^{x+y}=(A(t))^{x}(A(t))^{y}$.
In the next proposition, we will provide a closed form for the coefficients of the polynomials $p_n^{(\boldsymbol{a})}(x)$.

\begin{proposition} \label{prop:pan}
Given the polynomial $p_n^{(\boldsymbol{a})}(x) = \sum_{j=0}^n c_j x^j$, then
\begin{equation}\label{cj} c_j = \sum_{h=0}^{n-j} (-1)^h \cfrac{n!}{(h+j)!}  {h + j \brack j} B_{n,h+j}(a_1,\ldots,a_{n-h-j+1}),\end{equation}
for $j =0,1,2,...,n$, where ${k \brack j}$ are the (unsigned) Stirling numbers of the first kind.
\end{proposition}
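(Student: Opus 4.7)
The plan is to exploit the defining property $s_{e}(p^{(\boldsymbol{a})}(x))=A(t)^x$, i.e.\
$$A(t)^x=\sum_{n\geq 0} p_n^{(\boldsymbol{a})}(x)\,\frac{t^n}{n!},$$
so that extracting $c_j$ reduces to extracting the coefficient of $x^j t^n/n!$ on the left--hand side via two classical expansions: the generalised binomial series and the ordinary partial Bell polynomial identity of Definition \ref{def:bell}.

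The first step is to use $a_0=1$ to split $A(t)=1+U(t)$ with $U(t)=\sum_{n\geq 1}(a_n/n!)\,t^n$; since $U$ has zero constant term, the formal Newton binomial expansion
$$A(t)^x=(1+U(t))^x=\sum_{h\geq 0}\binom{x}{h}U(t)^h$$
holds in $R[x][[t]]$. Next I would expand $U(t)^h$ by means of the identity recalled in Definition \ref{def:bell}, which writes the coefficient of $t^n$ in $U(t)^h$ as a partial ordinary Bell polynomial in the coefficients $a_j/j!$ of $U$. In parallel I would convert $\binom{x}{h}$ into a polynomial in $x$ using the well--known expansion of the falling factorial in terms of unsigned Stirling numbers of the first kind,
$$\binom{x}{h}=\frac{1}{h!}\,x(x-1)\cdots(x-h+1)=\frac{1}{h!}\sum_{j=0}^{h}(-1)^{h-j}{h\brack j}x^j.$$

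Substituting both expansions, picking off the coefficient of $t^n$, and multiplying by $n!$ gives $p_n^{(\boldsymbol{a})}(x)$ as a double finite sum. The final step is to interchange the two sums and reindex $h\mapsto h+j$: the sign $(-1)^h$, the factor $n!/(h+j)!$, the Stirling weight ${h+j\brack j}$ and the Bell polynomial evaluated on the coefficients of $U$ all fall out at once, yielding exactly formula (\ref{cj}) and the stated range $0\leq h\leq n-j$.

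The only real obstacle is bookkeeping: reconciling the normalisation of the ordinary partial Bell polynomials of Definition \ref{def:bell} (which match $U$ written with $t^n$) with the e.g.f.\ normalisation $t^n/n!$ used to define $p^{(\boldsymbol{a})}(x)$, and ensuring that the combination of the extra $n!$ from the coefficient extraction and the $1/h!$ from the binomial coefficient produces the prefactor $n!/(h+j)!$ as stated. Apart from this indexing and factorial accounting, every step is a purely formal manipulation in $R[x][[t]]$ with no analytical subtlety.
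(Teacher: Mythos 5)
Your proposal follows essentially the same route as the paper's proof: write $A(t)^x=(1+U(t))^x=\sum_{k}\binom{x}{k}U(t)^k$, extract the coefficient of $t^n$ via the partial ordinary Bell polynomial identity to get $p_n^{(\boldsymbol{a})}(x)=n!\sum_{k=0}^n\binom{x}{k}B_{n,k}$, expand $\binom{x}{k}$ in unsigned Stirling numbers of the first kind, and reindex $k=h+j$. Your explicit attention to the bookkeeping between the ordinary normalisation $t^n$ in Definition \ref{def:bell} and the e.g.f.\ normalisation $t^n/n!$ is in fact a point the paper's notation $B_{n,k}(a_1,\ldots,a_{n-k+1})$ silently glosses over (the arguments should really be $a_j/j!$), so your version is, if anything, slightly more careful.
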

\begin{proof}
Since $a = (1, a_1, a_2, ...)$ has e.g.f. $A(t) = 1 + \sum_{n=1}^{+\infty} \cfrac{a_n}{n!}t^n$, we have
\begin{align*}
(A(t))^x& =\left(1 + \sum_{n=1}^{+\infty} \frac{a_n}{n!}t^n\right)^x=\sum_{k=0}^{+\infty}{x\choose k} \sum_{n\geq k} B_{n,k}(a_1,\ldots,a_{n-k+1})t^n =\\
&= \sum_{n=0}^{+\infty}\cfrac{1}{n!}\left( n! \sum_{k=0}^n \binom{x}{k}B_{n,k}(a_1,\ldots,a_{n-k+1}) \right)t^n.
\end{align*}
Thus,
\begin{equation}\label{pnx} p_n^{(\boldsymbol{a})}(x) = n!\sum_{k=0}^n  {x\choose k} B_{n,k}(a_1,\ldots,a_{n-k+1}).\end{equation}
Moreover, from equation 6.13 in \cite{Gra}, we can write
\begin{equation}\label{bingen}
\binom{x}{k} = \cfrac{(-1)^k}{k!} \sum_{j=0}^k (-1)^j {k \brack j} x^j\end{equation}
and finally, substituting this equality in (\ref{pnx}) 
$$p_n^{(\boldsymbol{a})}(x) = \sum_{j=0}^n \left( n! \sum_{k=j}^n B_{n,k}(a_{1},\ldots,a_{n-k+1}) \cfrac{(-1)^k}{k!}(-1)^j{k \brack j} \right) x^j$$
from which, replacing the index $k$ with $h=k-j$, the thesis follows.
\end{proof}

Interesting consequences arise studying the sequences  $\boldsymbol{a}^x \in U_R$ with respect to the Hurwitz product.
From (\ref{cj}) or evaluating the sequence of formal derivatives of $(A(t))^x$ 
$$x( A(t))^{x-1} A'(t), x (A(t))^{x-1} A''(t) + (x-1) x (A(t))^{x-2} A'(t)^2, ...$$
we easily obtain that $p_0^{(\boldsymbol{a})}(x) = 1$ and $x \vert p_n^{(\boldsymbol{a})}(x)$ for $n\geq 1$. Now if we consider, for example, $R = \mathbb Z_n$, we have for any $\boldsymbol{a} \in U_{\mathbb Z_n}$,  $\boldsymbol{a}^n = (1,0,0,...) =\boldsymbol{ \bar{1}}$. Moreover, let $p$ be a prime number, then any element different from $\boldsymbol{\bar{1}}$ in $U_{\mathbb Z_p}$ has period $p$. Thus $\left(U^{(n)}_{\mathbb Z_p}, \star\right)$ is a finite group of order $p^{n-1}$ and by the structure theorem for finite abelian groups, we have the following isomorphism 
\begin{equation}\label{isop}\left(U^{(n)}_{\mathbb Z_p},\star\right) \simeq \left(H^{(n-1)}_ {\mathbb Z_p},+\right).\end{equation}
Since (\ref{isop}) holds for every prime $p$ we also have for all $n\geq 2$
$$\left(U^{(n)}_{\mathbb Z},\star\right) \simeq \left(H^{(n-1)}_{\mathbb Z},+\right)$$
and consequently
\begin{equation}\label{isoz}\left(U_{\mathbb Z},\star\right) \simeq \left(H_{ \mathbb Z},+\right).\end{equation}
The interesting isomorphism (\ref{isoz}) is only a special case of a more general computable isomorphism that we can provide between $\left(U_{R},\star \right)$ and $\left(H_{R},+ \right)$.
Let us consider the infinite set of sequences $\mathcal B = \{\boldsymbol{b}^{(1)}, \boldsymbol{b}^{(2)},\boldsymbol{b}^{(3)},...\}$, where
$$\boldsymbol{\underline{1}}=\boldsymbol{b}^{(1)}=(1,1,1,1,...), \boldsymbol{b}^{(2)}=(1,0,1,0,...),\boldsymbol{b}^{(3)}=(1,0,0,1,0...),\ldots $$
i.e., for $i \geq 2$, $\boldsymbol{b}^{(i)}[i]=1$ and $\boldsymbol{b}^{(i)}[k]=0$ for any $k \not = 0, i$.
\begin{definition}
We define two elements 
$\boldsymbol{a}$ and $\boldsymbol{b}$ of $U_{R}$ as \emph{independent} if $<\boldsymbol{a}> \cap <\boldsymbol{b}> =\boldsymbol{\bar{1}}$, where $<\boldsymbol{u}>$ is the subgroup of $(U_R,\star)$ generated by $\boldsymbol{u} \in U_R$. 
\end{definition}
 Clearly  the sequences $\boldsymbol{b}^{(i)}$ are mutually independent, since $$p^{\left(\boldsymbol{b}^{(1)}\right)}(x)=\left(\boldsymbol{b}^{(1)}\right)^{x} =(x^n)_{n=0}^{+\infty}$$ and considering $p^{\left(\boldsymbol{b}^{(i)}\right)}(x)=\left(\boldsymbol{b}^{(i)}\right)^{x}$, we surely have $$\forall i\geq 2,\quad p^{\left(\boldsymbol{b}^{(i)}\right)}_{1}(x)=\cdots=p^{\left(\boldsymbol{b}^{(i)}\right)}_{i-1}(x)=0.$$ 
Thus we may consider the set $\mathcal B$ as a basis of $(U_{R},\star)$.

\begin{remark}
The sequences $p^{\left(\boldsymbol{b}^{(i)}\right)}(x)$ can be evaluated in a fast way by the following formulas. We have for $i\geq 2$
\begin{equation} \label{eq:bpot}p^{\left(\boldsymbol{b}^{(i)}\right)}_{0}(x) = 1, \quad p^{\left(\boldsymbol{b}^{(i)}\right)}_{n}(x) = \begin{cases} 0, \quad n \not\equiv 0 \pmod i \cr {x\choose k}\frac{(ik)!}{(i!)^k}=\cfrac{(ik)!}{k!(i!)^k} \prod_{h=1}^{k}(x-h+1), \quad n=ki  \end{cases}. \end{equation}
and when $i=1$ obviously
\begin{equation}\label{bpot1} \forall n \geq 0 \quad p^{\left(\boldsymbol{b}^{(1)}\right)}_n(x)=x^n\end{equation}
The equalities \eqref{eq:bpot} and \eqref{bpot1} easily follow by relation (\ref{pnx}) in Proposition \ref{prop:pan} and Definition \ref{def:bell} of partial ordinary Bell polynomials, or considering for all $i\geq 1$ the e.g.f.s $s_{e}\left(\left(\boldsymbol{b}^{(i)}\right)^x\right)$. 
\end{remark}

Now we can explicitly define an isomorphism between $\left(H^{(n-1)}_{R}, +\right)$ and $\left(U^{(n)}_{R}, \star\right)$ by means of the basis $\mathcal B$. 

\begin{theorem}
Let $\tau^{(n)}: \left(H^{(n-1)}_{R}, + \right) \rightarrow \left(U^{(n)}_{R}, \star\right)$ be defined as follows
$$\tau^{(n)}: \boldsymbol{x}=(x_0,...,x_{n-2}) \mapsto \prod_{k=1}^{n-1} \left(\boldsymbol{b}^{(k,n)}\right)^{x_{k-1}}=\prod_{k=1}^{n-1} p^{\left(\boldsymbol{b}^{(k,n)}\right)}(x_{k-1})$$
where $\boldsymbol{b}^{(k,n)}$ is the sequence $\boldsymbol{b}^{(k)}$ of length $n$, $n \geq 2$. Then $\tau^{(n)}$ is an isomorphism.
\end{theorem}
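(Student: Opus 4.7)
The plan is to check separately that $\tau^{(n)}$ is a group homomorphism and that it is a bijection.

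For the homomorphism property I would apply \eqref{px+y} to each factor and then use commutativity of $\star$ (which is immediate from \eqref{egf}, since the product of formal power series is commutative): writing $\boldsymbol{x}=(x_0,\ldots,x_{n-2})$ and $\boldsymbol{y}=(y_0,\ldots,y_{n-2})$, one obtains
\[
\tau^{(n)}(\boldsymbol{x}+\boldsymbol{y}) \;=\; \prod_{k=1}^{n-1} p^{(\boldsymbol{b}^{(k,n)})}(x_{k-1}+y_{k-1}) \;=\; \prod_{k=1}^{n-1} \bigl( p^{(\boldsymbol{b}^{(k,n)})}(x_{k-1}) \star p^{(\boldsymbol{b}^{(k,n)})}(y_{k-1}) \bigr),
\]
and rearranging the factors yields $\tau^{(n)}(\boldsymbol{x}) \star \tau^{(n)}(\boldsymbol{y})$.

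The substantive part is bijectivity, which I would establish by exploiting the sparse shape of the basis sequences. By \eqref{eq:bpot}, for $k\geq 2$ the sequence $p^{(\boldsymbol{b}^{(k,n)})}(x_{k-1})$ starts with $1$ at position $0$, vanishes at all positions $j$ with $0<j<k$, and carries precisely $x_{k-1}$ at position $k$; for $k=1$, formula \eqref{bpot1} gives $(1,x_0,x_0^2,\ldots)$. Feeding this into the Hurwitz product formula shows that right-multiplying any element of $U_R^{(n)}$ by $p^{(\boldsymbol{b}^{(k,n)})}(x_{k-1})$ with $k\geq 2$ leaves positions $0,1,\ldots,k-1$ unchanged and merely adds $x_{k-1}$ at position $k$, the only new cross-term being $\binom{k}{k}\cdot 1\cdot x_{k-1}$. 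Setting $\boldsymbol{c}^{(m)} := \prod_{k=1}^{m} p^{(\boldsymbol{b}^{(k,n)})}(x_{k-1})$, an easy induction then yields $\tau^{(n)}(\boldsymbol{x})[m] = \boldsymbol{c}^{(m)}[m]$ for every $m\leq n-1$, together with $\boldsymbol{c}^{(1)}[1] = x_0$ and $\boldsymbol{c}^{(m)}[m] = \boldsymbol{c}^{(m-1)}[m] + x_{m-1}$ for $m\geq 2$. Consequently the system $\tau^{(n)}(\boldsymbol{x}) = (1,a_1,\ldots,a_{n-1})$ is triangular with unit diagonal in the unknowns $x_0,\ldots,x_{n-2}$, so it admits a unique solution for every $(1,a_1,\ldots,a_{n-1})\in U_R^{(n)}$. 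This gives both injectivity and surjectivity, and even an explicit recursive inverse.

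I expect the main technical obstacle to lie in the bookkeeping of this Hurwitz convolution step: one must verify that, when the new factor $p^{(\boldsymbol{b}^{(k,n)})}(x_{k-1})$ is appended, the coefficient of $x_{k-1}$ produced at position $k$ is exactly $1$ and that none of the earlier unknowns $x_0,\ldots,x_{k-2}$ is accidentally introduced into that coefficient. This is precisely where one invokes the vanishing $p^{(\boldsymbol{b}^{(k)})}_j(x)=0$ for $0<j<k$ from \eqref{eq:bpot}, together with the identity $p^{(\boldsymbol{b}^{(k)})}_k(x)=x$ that follows by specialising the bracket formula in \eqref{eq:bpot} to the case of a single binomial factor $\binom{x}{1}$. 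Once this local analysis is in place, the triangular, unit-diagonal structure of the system makes both the homomorphism and bijectivity statements automatic.
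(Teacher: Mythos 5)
Your proof is correct, and its overall skeleton matches the paper's: the homomorphism property is read off from \eqref{px+y}, and bijectivity is reduced to a triangular system with unit diagonal in $x_0,\ldots,x_{n-2}$ that is solved recursively. Where you diverge is in how the triangularity is established. The paper first derives the explicit closed form \eqref{taui} for $\tau^{(n)}(\boldsymbol{x})[m]$ as a multinomial sum over the solutions of $\sum_{l=1}^{m}lj_l=m$, and then inspects that formula to see that $x_{m-1}$ first appears, linearly and with coefficient $1$, in position $m$; this yields the system \eqref{sis}. You instead avoid the explicit expansion entirely and argue by induction on the factors: since by \eqref{eq:bpot} each $p^{(\boldsymbol{b}^{(k,n)})}(x_{k-1})$ with $k\geq 2$ is $1$ at position $0$, zero at positions $1,\ldots,k-1$, and exactly $x_{k-1}$ at position $k$, appending it to a partial product fixes positions $0,\ldots,k-1$ and adds $x_{k-1}$ at position $k$ (the single cross-term $\binom{k}{0}c_0 d_k$). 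Your route is more economical and makes the unit-diagonal structure transparent without the combinatorial bookkeeping of \eqref{taui}; the paper's route has the side benefit of producing the explicit formula \eqref{taui}, which is reused later (e.g., in the example and in the theorem characterizing binomial type sequences via \eqref{umx}). Both arguments are sound; no gap.
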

\begin{proof}
Given any $\boldsymbol{x},\boldsymbol{y} \in H^{(n-1)}_{R}$ as a straightforward consequence of (\ref{px+y}) we have
$$\tau^{(n)}(\boldsymbol{x}+\boldsymbol{y}) = \tau^{(n)}(\boldsymbol{x}) \star \tau^{(n)}(\boldsymbol{y}).$$
Moreover, given any $\boldsymbol{a} = (1,a_1,...,a_{n-1}) \in U_{R}^{(n)}$ there exists one and only one sequence $\boldsymbol{x} \in H^{(n-1)}_{R}$ such that $\tau^{(n)}(\boldsymbol{x})=\boldsymbol{a}$.
To prove this claim, first of all we describe explicitly the terms of $\tau^{(n)}(\boldsymbol{x})$, using equations  \eqref{eq:bpot}, \eqref{bpot1}, and the definition of Hurwitz product $\star$

\begin{equation}\label{taui}\begin{cases}
\tau^{(n)}(\boldsymbol{x})[0]=1,\\
\tau^{(n)}(\boldsymbol{x})[1]=x_0,\\
\tau^{(n)}(\boldsymbol{x})[m]=m!\underset{\sum_{l=1}^{m}lj_{l}=m}{\sum}\frac{x^{j_1}_{0}}{j_{1}!}\stackrel[h=2]{m}{\prod}\frac{{x_{h-1}\choose j_{h}}}{(h!)^{j_h}},\quad m=2,\ldots,n-1\end{cases}\end{equation}
then, if we consider the equation $\tau^{(n)}(\boldsymbol{x})=\boldsymbol{a}$, we can solve it determining in a unique way every element of $\boldsymbol{x}$. Indeed, from \eqref{taui} is straightforward to observe that for  $i=1,\ldots,n-1$ the term $x_{i-1}$ appears for the first time and with power 1 in the expression of
$\tau^{(n)}(\boldsymbol{x})[i]$. Thus we obtain from \eqref{taui} the following system of equations
\begin{equation}\label{sis} x_0=a_1,\quad x_1+x^{2}_{0}=a_2, \quad x_{i-1} + P(x_0,...,x_{i-2}) =\tau^{(n)}(\boldsymbol{x})[i]= a_i,\quad i=3,\ldots,n-1, \end{equation}
where $$P(x_0,...,x_{i-2})=i!\sum_{\sum_{l=1}^{i-1}lj_{l}=i}\frac{x^{j_1}_{0}}{j_{1}!}\prod_{h=2}^{i-1}\frac{{x_{h-1}\choose j_{h}}}{(h!)^{j_h}}$$ 
and clearly the unique solution to \eqref{sis} can be recursively computed.
\end{proof}
\begin{example}
Given $\boldsymbol{x}= (x_0,x_1,x_2,x_3) \in H^{(4)}_{R}$, we have from \eqref{taui}
$$\tau^{(5)}(\boldsymbol{x}) = (1, x_0, x_0^2+x_1, x_0^3+3x_0x_1+x_2, x_0^4-3x_1+6x_0^2x_1+3x_1^2+4x_0x_2+x_3.)$$
Conversely knowing $\boldsymbol{a}=(1,a_1,a_2,a_3,a_4) \in U^{(5)}_{R}$, recursively solving the corresponding equations \eqref{sis}, we get the unique solution 
$\boldsymbol{x}\in H^{(4)}_{R}$  to $\tau^{(5)}(\boldsymbol{x})=\boldsymbol{a}$ given by
 $$x_0=a_1,\quad x_1=a_2-a_{1}^2$$
 $$x_2=a_3-P(x_0,x_1)=a_3-P(a_1,a_2-a_{1}^2)=a_3+2a_1^3-3a_1a_2$$
 \begin{align*}x_3&=a_4-P(x_0,x_1,x_2)=a_4-P(a_1,a_2-a_{1}^2,a_3+2a_1^3-3a_1a_2)=\\
 &=a_4-3a_1^2-6a_1^4+3a_2+12a_1^2a_2-3a_2^2-4a_1a_3.\end{align*}
\end{example}

As an immediate consequence of the previous theorem, we can define and evaluate the isomorphism $$\tau: \left(H_{R},+\right) \rightarrow \left(U_{R},\star \right)$$  acting as  $\tau^{(n+1)}$ on the first $n$  elements of a sequence $\boldsymbol{x}\in H_{R}$, for all $n\geq1$.

Since $R$ is a generic unitary commutative ring, we point out that all the results we have proved also hold if we consider the ring of polynomials $R[x]$, instead of $R$, dealing with sequences whose terms are polynomials.
In the next section, we will consider binomial type sequences, which are strictly connected with the polynomials $p^{(\boldsymbol{a})}(x)$ and we will use the isomorphism 
$$\tau: \left(H_{R[x]},+\right) \rightarrow \left(U_{R[x]},\star \right)$$
in order to give a method for the construction of all the binomial type sequences.

\section{Binomial type sequences}

A sequence of polynomials $q(x) = (q_n(x))_{n=0}^{+\infty}$, $q_n(x) \in R[x]$, is called a \emph{binomial type sequence} if $q_0(x) = 1$ and
$$q_n(x+y) = \sum_{h=0}^n \binom{n}{h}q_n(x)q_{n-h}(x).$$
In terms of the Hurwitz product $\star$  the above condition can be restated as
$$q(x+y) = q(x) \star q(y),$$
and clearly  $q(x)\in U_{R[x]}$.

The binomial type sequences are a very important subject deeply  studied during the years. They were introduced in \cite{Rota} and used in the theory of umbral calculus. On the other hand, binomial type sequences are widely used in combinatorics and probability. For instance, in \cite{Sch} the author showed the use of binomial type sequences to solve many ''tiling'' problems and in \cite{Mih} the author established new relations between binomial type sequences and Bell polynomials. Further interesting results can be found in \cite{Kisil}, \cite{Goss} and \cite{Bucc}. Moreover, many well--known and useful polynomials are binomial type sequences, like Abel, Laguerre and Touchard polynomials.

It is easy to see, as we have already pointed out in \eqref{px+y}, that the polynomial sequences of the kind $p^{(\boldsymbol{a})}(x)$ are binomial type sequences. Moreover, we have the following result, i.e., any binomial type sequences is of the form $p^{(\boldsymbol{a})}(x)$ for a certain $\boldsymbol{a}\in U_{R}$.

\begin{proposition}
If $q(x)$ is a binomial type sequence, then there exists $ \boldsymbol{a} \in U_R$ such that $q(x) = p^{(\boldsymbol{a})}(x)$.
\end{proposition}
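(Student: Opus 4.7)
The plan is to take the natural candidate $\boldsymbol{a} := q(1) = (q_n(1))_{n \geq 0}$, which lies in $U_R$ since $q_0(1) = 1$, and to show that $q(x)$ coincides with $p^{(\boldsymbol{a})}(x)$ as elements of $U_{R[x]}$.

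First I would translate the binomial type condition into the e.g.f. framework. Writing $Q(x,t) = s_e(q(x)) \in R[x][[t]]$ and $A(t) = s_e(\boldsymbol{a})$, the relation $q(x+y) = q(x) \star q(y)$ becomes the multiplicative functional equation $Q(x+y,t) = Q(x,t) Q(y,t)$, and the conclusion $q(x) = p^{(\boldsymbol{a})}(x)$ is precisely $Q(x,t) = A(t)^x$. Specializing at $x = y = 0$ gives that $Q(0,t)$ is idempotent in the units of the Hurwitz series ring and therefore equal to $1$; equivalently, $q_n(0) = 0$ for every $n \geq 1$. Iterating the equation at $y=1$ then yields $Q(m,t) = A(t)^m$ for all non-negative integers $m$, so $q_n(m) = p_n^{(\boldsymbol{a})}(m)$ on the set $\{0,1,2,\dots\}$.

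To promote this equality of values to a polynomial identity in $R[x]$, I would proceed by induction on $n$. The base case $q_0 = 1 = p_0^{(\boldsymbol{a})}$ is trivial. For the inductive step I would isolate the extreme terms in the binomial relation, writing
\[
q_n(x+y) - q_n(x) - q_n(y) = \sum_{0 < h < n} \binom{n}{h} q_h(x)\, q_{n-h}(y),
\]
and combine the induction hypothesis with the analogous identity for $p^{(\boldsymbol{a})}$. Subtracting, the difference $r_n := q_n - p_n^{(\boldsymbol{a})} \in R[x]$ satisfies $r_n(x+y) = r_n(x) + r_n(y)$ in $R[x,y]$, with $r_n(0) = 0$ and $r_n(1) = q_n(1) - p_n^{(\boldsymbol{a})}(1) = a_n - a_n = 0$.

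The main obstacle is this final step: concluding $r_n \equiv 0$ from additivity together with the vanishing at non-negative integers. Over a $\mathbb{Q}$-algebra the conclusion is immediate, since additive polynomials are linear and $r_n(1) = 0$ pins down the linear coefficient; equivalently, one can just set $L(t) = \partial_y Q(0,t)$ in the e.g.f.\ formulation, obtain $\partial_x Q(x,t) = L(t) Q(x,t)$, and integrate to get $Q(x,t) = \exp(xL(t)) = A(t)^x$. Over a general commutative ring this route requires more care, and I would expect to invoke either a universality argument (reduce to the torsion-free ring $\mathbb{Z}[a_1,a_2,\ldots]$ and then to its field of fractions, in which the $\mathbb{Q}$-algebra proof applies) or, more directly, the explicit Bell polynomial formula from Proposition \ref{prop:pan} together with the Newton forward-difference expansion in the basis $\{(n!/k!)(x)_k\}_{k=0}^n$, whose coefficients depend only on the values $q_n(0),\dots,q_n(n)$ and are therefore the same for $q_n$ and for $p_n^{(\boldsymbol{a})}$.
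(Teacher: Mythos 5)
Your route is genuinely different from the paper's. The paper's proof takes the coefficients $c_i$ of $x$ in the $q_i(x)$, forms $f(t)=\sum_{n\geq 1}c_n t^n/n!$, quotes \cite{Rota} for the identity $s_e(q(x))=\exp(xf(t))$, and sets $\boldsymbol{a}=s_e^{-1}(\exp(f(t)))$; essentially all the content is outsourced to the cited theorem. You instead take $\boldsymbol{a}=q(1)$, establish $Q(m,t)=A(t)^m$ at non-negative integers, and try to upgrade this to a polynomial identity by induction on $n$. Everything up to and including the derivation of $r_n(x+y)=r_n(x)+r_n(y)$ with $r_n(m)=0$ for all $m\in\{0,1,2,\dots\}$ is correct, and over a $\mathbb{Q}$-algebra the argument closes cleanly (additivity kills every coefficient of degree at least $2$, and $r_n(0)=r_n(1)=0$ kills the rest); in that setting your proof is more self-contained than the paper's.

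The gap is exactly where you flagged it, and neither of your two proposed repairs closes it, because over a general commutative ring the claim needed in the last step is false: an additive polynomial vanishing at all non-negative integers need not be zero. Concretely, over $R=\mathbb{Z}_2$ the polynomial $x^2+x=x(x-1)$ is additive and vanishes at every integer, and $q(x)=(1,\,0,\,x^2+x,\,0,\,0,\dots)$ is a genuine binomial type sequence (the only nontrivial check is $n=4$, where the cross term is $\binom{4}{2}q_2(x)q_2(y)=6\,q_2(x)q_2(y)=0$ in $\mathbb{Z}_2$), yet it is not of the form $p^{(\boldsymbol{a})}(x)$: from $p_1^{(\boldsymbol{a})}(x)=a_1x=0$ one gets $a_1=0$, and then $p_2^{(\boldsymbol{a})}(x)=a_2x$ has no $x^2$ term. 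This example defeats the universality reduction (such a $q$ is not a specialization of a binomial type sequence over the torsion-free ring $\mathbb{Z}[a_1,a_2,\dots]$) and also the Newton forward-difference argument (the falling-factorial coefficients of a polynomial over a general ring are not determined by its values at $0,\dots,n$, since recovering them requires dividing by $k!$). So your proof is complete only for $\mathbb{Q}$-algebras. In fairness, the paper's own proof carries the same implicit restriction, since the $\exp(xf(t))$ identity of \cite{Rota} is a characteristic-zero statement and fails for the example above; but you should either restrict to $\mathbb{Q}$-algebras explicitly or refrain from presenting the general case as a routine extension.
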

\begin{proof}
It suffices to prove that $s_{e}(q(x))=(A(t))^x$, where $A(t)=s_{e}(\boldsymbol{a})$ for a suitable sequence $\boldsymbol{a} \in U_R$. Let $c_i$ be the coefficient of $x$ in the polynomial $q_i(x)$. Then  $s_{e}(q(x))=\exp(xf(t))$, where $f(t) = \sum_{n=1}^\infty c_n \cfrac{t^n}{n!}$,  as proved in \cite{Rota}. Thus  we have only to choose $\boldsymbol{a}=s_{e}^{-1}(\exp(f(t))$.
\end{proof}
The result showed in the previous proposition has been also stated in \cite{Bucc2}. In the next theorem we highlight a novel characterization of the binomial type sequences by means of the isomorphism $\tau$ and the basis $\mathcal B$ defined in the previous section.

\begin{theorem}
For any $\boldsymbol{u} = (u_0, u_1, ...)\in H_R$, let $x\boldsymbol{u} = (xu_0, xu_1, ....)$ be a sequence of polynomials in $R[x]$, if we consider the isomorphism
$$\tau: \left(H_{R[x]},+\right) \rightarrow \left(U_{R[x]},\star \right)$$ 
 then $\tau(x\boldsymbol{u})$ is a binomial type sequence. Conversely, any binomial type sequence is of the form $\tau(x\boldsymbol{u})$ for a certain sequence $\boldsymbol{u} \in H_R$.
\end{theorem}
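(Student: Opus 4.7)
The plan is to prove both directions by leveraging the fact that the formulas \eqref{taui} defining $\tau$ are universal polynomial expressions in the entries of the input, hence apply verbatim over any commutative ring (in particular $R[x]$ and $R[x,y]$) and commute with substitutions of the variable $x$.

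For the forward direction, I would fix $\boldsymbol{u} \in H_R$ and work over $R[x,y]$, so that both $x\boldsymbol{u}$ and $y\boldsymbol{u}$ lie in $H_{R[x,y]}$. Because $\tau$ is additive and $x\boldsymbol{u} + y\boldsymbol{u} = (x+y)\boldsymbol{u}$, this will immediately give
\begin{equation*}
\tau\bigl((x+y)\boldsymbol{u}\bigr) = \tau(x\boldsymbol{u}) \star \tau(y\boldsymbol{u}).
\end{equation*}
Setting $q(x) := \tau(x\boldsymbol{u})$ and invoking the substitution-commutation property (substituting $x \mapsto x+y$ or $x \mapsto y$ on the input of $\tau$ produces the same substitution on the output), the left side becomes $q(x+y)$ and the right side $q(x) \star q(y)$, so $q$ is a binomial type sequence.

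For the converse, I would start with a binomial type sequence $q(x)$ and use that $\tau: H_{R[x]} \to U_{R[x]}$ is an isomorphism to obtain the unique $\boldsymbol{v} \in H_{R[x]}$ with $\tau(\boldsymbol{v}) = q(x)$. The goal then reduces to showing that each entry $v_n(x)$ has the form $u_n x$ with $u_n \in R$. Applying $\tau$ over $R[x,y]$ and again using substitution-commutation, the chain $q(x+y) = q(x) \star q(y) = \tau(\boldsymbol{v}(x)) \star \tau(\boldsymbol{v}(y)) = \tau(\boldsymbol{v}(x) + \boldsymbol{v}(y))$ combined with $\tau(\boldsymbol{v}(x+y)) = q(x+y)$ and the injectivity of $\tau$ will yield
\begin{equation*}
\boldsymbol{v}(x+y) = \boldsymbol{v}(x) + \boldsymbol{v}(y) \qquad \text{in } H_{R[x,y]}.
\end{equation*}
Each $v_n(x) \in R[x]$ is therefore a Cauchy-additive polynomial; comparing coefficients of $x^j y^{k-j}$ in the torsion-free setting implicit in the paper's formulas will then force $v_n(x) = u_n x$ for a unique $u_n \in R$, and $\boldsymbol{u} := (u_n)_{n \geq 0}$ will be the sequence sought.

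The main obstacle is articulating cleanly the substitution-commutation property of $\tau$: one has to note that each component $\tau(\boldsymbol{v})[m]$ is a fixed polynomial expression in $v_0,\ldots,v_{m-1}$ (as made explicit in \eqref{taui}), so any ring homomorphism applied entrywise to $\boldsymbol{v}$ commutes with $\tau$. Once that is in place, both directions collapse to the observation that the binomial type sequences are precisely the images under $\tau$ of the linear-in-$x$ elements of $H_{R[x]}$, and the forward and converse assertions drop out of the identity $x\boldsymbol{u} + y\boldsymbol{u} = (x+y)\boldsymbol{u}$ together with a routine coefficient comparison.
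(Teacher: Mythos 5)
Your proposal is correct and follows essentially the same route as the paper: the forward direction via additivity of $\tau$ applied to $x\boldsymbol{u}+y\boldsymbol{u}=(x+y)\boldsymbol{u}$, and the converse by pulling $q(x)$ back through $\tau^{-1}$, deriving $\boldsymbol{v}(x+y)=\boldsymbol{v}(x)+\boldsymbol{v}(y)$, and concluding each component is linear in $x$. Your two added precisions --- the substitution-commutation property of $\tau$ (needed to make sense of $q(x+y)$ versus $\tau$ evaluated over $R[x,y]$) and the torsion-free caveat for ``additive polynomial implies linear'' (which genuinely fails over, e.g., $R=\mathbb{Z}_p$, where $x^p$ is additive) --- are both points the paper glosses over, so they strengthen rather than deviate from its argument.
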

\begin{proof}
The sequence $\tau(x\boldsymbol{u})$ satisfies the relation
$$\tau((x+y)\boldsymbol{u})=\tau(x\boldsymbol{u}+y\boldsymbol{u})=
\tau(x\boldsymbol{u})\star\tau(y\boldsymbol{u})$$
and $\tau(x\boldsymbol{u})[0]=1$, thus $\tau(x\boldsymbol{u})$ is a binomial type sequence.
On the other hand, since $\tau$ is an isomorphism, given a binomial type sequence  $q(x) \in U_{R[x]}$ there exists a unique sequence $r(x) = (r_i(x))_{i=0}^{+\infty} \in H_{R[x]}$ such that
$$\tau^{-1}(q(x))=r(x).$$ 
Moreover we must have 
\begin{equation}\label{rrrr} r(x+y)=r(x)+r(y)\end{equation}
indeed
$$r(x+y)=\tau^{-1}(q(x+y)) = \tau^{-1}(q(x)\star q(y))=\tau^{-1}(q(x))+\tau^{-1}(q(y))=r(x)+r(y),$$
The equation \eqref{rrrr} is satisfied in $H_{R[x]}$ if and only if
$$\forall i \geq 0 \quad r_i(x) = xu_i, \quad u_i \in R.$$ Hence, $q(x) = \tau(x\boldsymbol{u})$ for a certain sequence $\boldsymbol{u}\in H_{R}$.
\end{proof}
We would like to point out that the previous theorem give a computable method to evaluate binomial type sequences, starting from any sequence in $H_R$.
For example, if we  consider the Fibonacci sequence $\boldsymbol{F}=(1,1,2,3,5,8,13,...)$ then, using (\ref{taui}), we obtain the binomial type sequence
$$\tau(x\boldsymbol{F}) = (1, x, x^2+x, x^3+3x^2+2x, x^4+6x^3+11x^2, x^5 +10x^4 +35x^3+20x^2 +5x,\ldots).$$
 On the other hand, given any binomial type sequence in $U_{R[x]}$, we have a computable method to find the sequence in $H_R$ that generates it.
In fact we provide in the following theorem a recursive formula which gives all the terms of the sequence $\boldsymbol{u}\in H_{R}$  such that $\tau(x\boldsymbol{u})=q(x)$ for a given binomial type sequence $q(x)$.
\begin{theorem}
Let us consider a binomial type sequence $q(x)=\left(q_{n}(x)\right)_{n=0}^{+\infty}$, where $q_{0}(x)=1$ and $q_{n}(x)=\sum_{i=1}^{n}c_{i,n}x^{i}$. Then we have $\tau(x\boldsymbol{u})=q(x)$ if  
\begin{equation}\label{umx}
\forall m\geq 1, \quad u_{m-1}=c_{1,m}+\underset{
k|m,
k\neq1,m
}{\sum}\frac{\left(-1\right)^{\frac{m}{k}}m!}{\frac{m}{k}\left(k!\right)^{\frac{m}{k}}}u_{k-1}.
\end{equation}
\end{theorem}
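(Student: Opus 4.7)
The plan is to compute $s_{e}(\tau(x\boldsymbol{u}))$ explicitly via the basis factorization provided by $\tau$, match it against the e.g.f.\ representation of a binomial type sequence, and read off $u_{m-1}$ recursively by equating coefficients of $t^{m}/m!$.

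First I would observe that, by the definition of $\tau$ and the multiplicativity of $s_{e}$ with respect to $\star$ noted in \eqref{egf},
$$s_{e}(\tau(x\boldsymbol{u})) = \prod_{k=1}^{+\infty} s_{e}\!\left(\left(\boldsymbol{b}^{(k)}\right)^{x u_{k-1}}\right),$$
and since $s_{e}(\boldsymbol{b}^{(1)})=e^{t}$ while $s_{e}(\boldsymbol{b}^{(k)})=1+t^{k}/k!$ for $k\geq 2$, this product equals
$$\exp(x u_{0} t)\prod_{k\geq 2}\left(1+\frac{t^{k}}{k!}\right)^{x u_{k-1}}.$$
The infinite product is well defined as a formal power series since the $k$-th factor has constant term $1$ and only affects coefficients of degree $\geq k$.

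Second, the proposition on binomial type sequences already invoked in this section yields $s_{e}(q(x))=\exp(xf(t))$ with $f(t)=\sum_{n\geq 1} c_{1,n}\,t^{n}/n!$. Equating the two expressions, taking formal logarithms and dividing by $x$, the relation $\tau(x\boldsymbol{u})=q(x)$ is equivalent to
$$f(t) = u_{0}\, t + \sum_{k\geq 2} u_{k-1}\log\!\left(1+\frac{t^{k}}{k!}\right).$$
I would then expand $\log(1+t^{k}/k!)=\sum_{j\geq 1}\frac{(-1)^{j-1}}{j(k!)^{j}}t^{jk}$ and extract the coefficient of $t^{m}/m!$ on each side for every $m\geq 1$. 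The left hand side gives $c_{1,m}$; on the right, the summand $u_{0}t$ contributes only when $m=1$, and for $m\geq 2$ the pairs $(k,j)$ with $jk=m$ and $k\geq 2$ — equivalently the divisors $k\mid m$ with $k\geq 2$ — each contribute $u_{k-1}\cdot\frac{(-1)^{m/k-1}\,m!}{(m/k)(k!)^{m/k}}$. Isolating the divisor $k=m$ (which gives $j=1$ and the term $u_{m-1}$) and moving the remaining contributions, indexed by divisors $k\mid m$ with $k\neq 1,m$, to the opposite side produces precisely the recursion \eqref{umx}; the case $m=1$ reduces to $u_{0}=c_{1,1}$ by the empty sum convention.

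The main obstacle is purely bookkeeping: recognizing that $jk=m$ forces $k\mid m$, so only finitely many $(k,j)$ contribute to each coefficient, and then carefully tracking the signs and factorial factors arising from the logarithmic expansion. Once the multiplicative decomposition of $s_{e}(\tau(x\boldsymbol{u}))$ over the basis $\mathcal B$ is in place, the remainder is routine coefficient extraction.
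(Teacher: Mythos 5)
Your proof is correct, and it takes a genuinely different route from the paper's. The paper works directly with the explicit combinatorial expression \eqref{taui} for $\tau^{(n)}(x\boldsymbol{u})[m]$: it identifies which solutions of $\sum_{l}lj_{l}=m$ can contribute a term linear in $x$ (only those concentrated on a single divisor, $j_k=m/k$), and then extracts the coefficient of $x$ from $\binom{u_{k-1}x}{j_k}$ via the Stirling number identity \eqref{bingen} together with ${j_k \brack 1}=(j_k-1)!$. You instead linearize at the level of exponential generating functions: factoring $s_e(\tau(x\boldsymbol{u}))=e^{xu_0t}\prod_{k\geq 2}(1+t^k/k!)^{xu_{k-1}}$ over the basis $\mathcal B$, invoking the Rota characterization $s_e(q(x))=\exp(xf(t))$ with $f(t)=\sum_{n\geq1}c_{1,n}t^n/n!$ (already used in the preceding proposition of the paper), and taking formal logarithms so that the single-divisor structure of the recursion becomes transparent --- $\log(1+t^k/k!)$ contributes only to powers $t^{jk}$, which is exactly the paper's observation about which partitions survive, but obtained for free. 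Your coefficient bookkeeping checks out and reproduces \eqref{umx} exactly, including the sign. The one point deserving a remark is that the formal logarithm introduces intermediate denominators $1/j$, so over a general commutative ring $R$ not containing $\mathbb{Q}$ this step is formally an identity in $\mathbb{Q}[x,u_0,u_1,\dots][[t]]$ that must be specialized back to $R$ at the end; this is harmless because the final coefficients $m!/\bigl(\tfrac{m}{k}(k!)^{m/k}\bigr)$ are integers, but the paper's route through ${j_k \brack 1}=(j_k-1)!$ keeps integrality manifest throughout, which is a small advantage of its more pedestrian computation.
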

\begin{proof}
From the definition of $\tau$ and \eqref{taui} we have 
$$\tau^{\left(n\right)}\left(x\boldsymbol{u}\right)\left[0\right]=1=q_{0}\left(x\right), \quad
\tau^{\left(n\right)}\left(x\boldsymbol{u}\right)\left[1\right]=xu_{0}=c_{1,1}x=
q_{1}\left(x\right)$$
and for all $m\geq 2$, the polynomial $q_{m}(x)$ is equal to
\begin{equation}\label{tumx}
\tau^{\left(n\right)}\left(x\boldsymbol{u}\right)\left[m\right]=m!\underset{\stackrel[l=1]{m}{\sum}lj_{l}=m}{\sum}\frac{\left(c_{1,1}x\right)^{j_{1}}}{j_{1}!}\stackrel[h=2]{m}{\prod}\frac{\binom{
u_{h-1}x}
{j_{h}}
}{\left(h!\right)^{j_{h}}}.
\end{equation}
Clearly $u_0=c_{1,1}$ and, in order to find the value of $u_{m-1}$ for $m\geq 2$, we only need to find the coefficient of $x$ in \eqref{tumx}, since it must be equal to $c_{1,m}$. We observe that in the summation showed in \eqref{tumx} we find $x$ multiplied by a suitable coefficient only when we consider solutions of $\sum_{l=1}^{m}lj_l=m$ having the form $j_k=\frac{m}{k}$,   with $j_l=0$ for all $l\neq k$, where $k|m$ and $k\neq 1$. Indeed, when $k=m$ we obtain the summand $u_{m-1}x$ and, for all $k|m$, $k\neq 1,m$, we easily find from 
$\binom{
u_{k-1}x}
{j_{k}}
\frac{m!}{\left(k!\right)^{j_{k}}}$
the summands 
$$
\frac{\left(-1\right)^{j_{k}+1}}{\left(k!\right)^{j_{k}}}\frac{m!}{j_{k}!}{
j_{k}
\brack
1}
\left(u_{k-1}x\right)=\frac{\left(-1\right)^{j_{k}+1}m!}{j_{k}\left(k!\right)^{j_{k}}}u_{k-1}x
$$
using \eqref{bingen}, since ${j_k \brack 1}=\left(j_{k}-1\right)!.$ 
Thus we have 
$$
u_{m-1}+\underset{
k|m,
k\neq1,m
}{\sum}\frac{\left(-1\right)^{\frac{m}{k}+1}m!}{\frac{m}{k}\left(k!\right)^{\frac{m}{k}}}u_{k-1}=c_{1,m},
$$ 
from which equation \eqref{umx} easily follows.
\end{proof}
\begin{remark}
	As a straightforward consequence we observe that if $p$ is a prime number then $u_{p-1}=c_{1,p}$.
\end{remark}

It would be interesting for further research to focus on the case $R = \mathbb Z$, in order to find new relations between integer sequences and polynomial sequences of binomial type. We provide some examples listed in the table below, where, for some well--known binomial type sequences, we give the first terms of the corresponding sequence  arising from (\ref{umx}).

\small
\begin{center}
\begin{tabular}{|c|c|c|}
	\hline 
Binomial type polynomials	& Expression & Corresponding $\boldsymbol{u}$ \\ 
	\hline 

Increasing powers	& $x^{n}$  & $\boldsymbol{\bar{1}} = (1,0,0,0,...)$ \\ 
	\hline
Laguerre	& $L_n(x)=\sum_{k=0}^n \frac{n!}{k!}\binom{n-1}{k-1}(-x)^k$  & (-1,-2,-6,-30,-120,-720,-5040,...) \\ 
	\hline 
Touchard	& $T_n(x) = \sum_{k=0}^{n}\stirlingtwo{n}{k}x^k$ & (1,1,1,4,1,-19,1,771,-559,...) \\ 
	\hline 
Abel	& $A_n(x,a)=x(x-an)^{n-1}$  & $(1, -2 a, 9 a^2, -6a -64 a^3, 625 a^4,\ldots)$ \\ 
	\hline 
Pochhammer	& $(x)_n=x(x+1)(x+2)\cdots(x+n-1)$  & $(1, 1, 2 , 9 , 24 , 110 , 720 , 5985 , 39200 \ldots)$ \\ 

	\hline 
\end{tabular} 
\end{center}
\normalsize

As far as we know, none of the sequences $\boldsymbol{u}$ different from $\boldsymbol{\bar{1}}$, seems to be already recorded in Oeis \cite{Oeis}.

\end{document}